\newcommand\nthalias[1]{\AddToHook{env/#1/begin}{\crefalias{lemma}{#1}}}
\crefname{section}{Section}{Sections}
\crefname{subsection}{\S}{\S\S}
\crefname{subsubsection}{\S}{\S\S}
\theoremstyle{plain}
\newtheorem{lemma}{Lemma}[section]
\newtheorem{corollary}[lemma]{Corollary}
\newtheorem{theorem}[lemma]{Theorem}
\theoremstyle{plain}
\newtheorem{theoremN}{Theorem}
\theoremstyle{plain}
\newtheorem{example}[lemma]{Example}
\newtheorem{remark}[lemma]{Remark}
\newtheorem{remarks}[lemma]{Remarks}
\crefname{definition}{definition}{definitions}
\crefname{ex}{example}{examples}
\crefname{exs}{example}{examples}
\crefname{remark}{remark}{remarks}
\crefname{remarks}{remark}{remarks}
\crefname{convention}{convention}{conventions}
\crefname{notation}{notation}{notations}
\crefname{table}{table}{tables}
\crefname{lemma}{lemma}{lemmas}
\crefname{proposition}{proposition}{propositions}
\crefname{propositionN}{proposition}{propositions}
\crefname{corollary}{corollary}{corollaries}
\crefname{corollaryN}{corollary}{corollaries}
\crefname{theorem}{theorem}{theorems}
\crefname{theoremN}{theorem}{theorems}
\crefname{enumi}{}{}
\crefname{assumption}{assumption}{Assumptions}
\crefname{construction}{construction}{Constructions}
\crefname{question}{question}{Questions}
\crefname{equation}{}{}
\numberwithin{equation}{section}
\renewcommand{\theequation}{\thesection-\arabic{equation}}
\theoremstyle{nonumberplain}
\newtheorem{proof}{Proof}
\newcommand\pf[1]{\newtheorem{#1}{Proof of \Cref{#1}}}
\newcommand\bC{{\mathbb C}}
\newcommand\bD{{\mathbb D}}
\newcommand\bG{{\mathbb G}}
\newcommand\bH{{\mathbb H}}
\newcommand\bK{{\mathbb K}}
\newcommand\bL{{\mathbb L}}
\newcommand\bM{{\mathbb M}}
\newcommand\bQ{{\mathbb Q}}
\newcommand\bR{{\mathbb R}}
\newcommand\bS{{\mathbb S}}
\newcommand\bT{{\mathbb T}}
\newcommand\bZ{{\mathbb Z}}
\newcommand\cT{{\mathcal T}}
\newcommand\fg{{\mathfrak g}}
\newcommand\fj{{\mathfrak j}}
\newcommand\fk{{\mathfrak k}}
\newcommand\fl{{\mathfrak l}}
\newcommand\fs{{\mathfrak s}}
\DeclareMathOperator{\Ad}{Ad}
\DeclareMathOperator{\Aut}{\mathrm{Aut}}
\DeclareMathOperator{\im}{\mathrm{im}}
\DeclareMathOperator{\rk}{\mathrm{rk}}
\newcommand\numberthis{\addtocounter{equation}{1}\tag{\theequation}}
\newcommand{\qedhere}{\mbox{}\hfill\ensuremath{\blacksquare}}
\newcommand{\xrightarrowdbl}[2][]{%
  \xrightarrow[#1]{#2}\mathrel{\mkern-14mu}\rightarrow
}
\title{Generic infinite generation, fixed-point-poor representations and compact-element abundance in disconnected Lie groups}
\author{Alexandru Chirvasitu}
\begin{document}

\date{}

\newcommand{\Addresses}{{
  \bigskip
  \footnotesize

  \textsc{Department of Mathematics, University at Buffalo}
  \par\nopagebreak
  \textsc{Buffalo, NY 14260-2900, USA}  
  \par\nopagebreak
  \textit{E-mail address}: \texttt{achirvas@buffalo.edu}


}}

\maketitle

\begin{abstract}
  The semidirect product $\mathbb{G}=\mathbb{L}\rtimes \mathbb{K}$ attached to a compact-group action on a connected, simply-connected solvable Lie group has a dense set of compact elements precisely when the $s\in \mathbb{K}$ operating on $\mathbb{L}$ fixed-point-freely constitute a dense set. This (along with a number of alternative equivalent characterizations) extends the Wu's analogous result for connected Lie $\mathbb{K}$, and also provides ample supplies of examples of almost-connected Lie groups $\mathbb{G}$ which do not have dense sets of compact elements, even though their identity components $\mathbb{G}_0$ do. This corrects prior literature on the subject, claiming the property equivalent for $\mathbb{G}$ and $\mathbb{G}_0$.

  In a related discussion we characterize those connected Lie groups $\mathbb{G}$ with large sets of $d$-tuples generating dense subgroups $\Gamma\le \mathbb{G}$ for which the derived subgroup $\Gamma^{(1)}$ fails to be finitely-generated: $\mathbb{G}$ must either be non-trivial topologically perfect or have non-nilpotent maximal solvable quotient. 
\end{abstract}

\noindent \emph{Key words:
  Lie algebra;
  Lie group;
  derived series;
  elliptic element;
  finitely-generated;
  maximal pro-torus;
  nilpotent;
  solvable
}

\vspace{.5cm}

\noindent{MSC 2020: 17B30; 22E25; 20F16; 20F18; 22D05; 22C05; 22D12; 11R04


}


\section*{Introduction}

The problems discussed below all ultimately stem from attempts to probe how abundant \emph{elliptic} (sometimes \emph{compact} \cite[Definition 2.1]{2410.08083v1}) elements (i.e. \cite[Theorem IX.7.2]{helg_dglgssp} relatively-compact-subgroup generators) are in Lie or, more generally, locally compact groups. Several constructions can be placed on the question, and one version (\cite[pre Proposition 2.5]{zbMATH03654393}, \cite[Theorem 2]{zbMATH03975248}, \cite[Theorem 1.6]{MR3578406}, \cite[Theorem A]{2506.18861v2}) asks under what conditions a (connected, say) locally compact group contains a dense sub\emph{group} consisting of elliptic elements.

One way to go about proving that such a subgroup $\Gamma\le \bG$ exists in a connected Lie group $\bG$ is to argue that in fact for large $d\in \bZ_{\ge 0}$ ``most'' $d$-tuples in $\bG^d$ generate such subgroups. It becomes relevant, in this context, whether and to what extent one can rely on the first \emph{derived subgroup} $\Gamma^{(1)}$ \cite[post Corollary 2.22]{rot-gp} again being finitely-generated (see the introductory remarks to \Cref{se:fg} below).These considerations are what motivate \Cref{thn:nil.quot} below, to the effect that generically said derived finite generation fails. Some notation and terminology will help make sense of the statement. 

The notation for various characteristic series of Lie groups/algebras follows \cite[Definitions 10.1, 10.5, 10.8 and 10.9]{hm_pro-lie-bk}: single parentheses (brackets) indicate terms of the \emph{derived} (\emph{lower central}) series respectively, as in $\bG^{(n)}$ or $\fg^{[m]}$, and doubling indicates the analogously-defined \emph{closed} series (e.g. $\bG^{((n))}$ for a topological group $\bG$). The numbering always starts at 0:
\begin{equation*}
  \bG=\bG^{(0)}\ge \bG^{(1)}\cdots
  ,\quad
  \fg=\fg^{[0]}\ge \fg^{[1]}\ge \cdots
  ,\quad\text{etc.}
\end{equation*}
The index can be infinite, indicating the intersection of the successively-smaller members of the series, as in, say, $\bG^{((\infty))}:=\bigcap_n \bG^{((n))}$.

One way to formalize ``largeness'' for a subset of a topological space is to require that it be \emph{residual} \cite[\S 8.A]{kech_descr}: its complement is expressible as a countable union of nowhere-dense sets (i.e. sets whose closures have empty interiors). A subset $A\subseteq X$ of a topological space is \emph{cluster-residual} if every $x\in A$ has a neighborhood $U\ni x$ in $X$ with $A\cap U\subseteq U$ residual.

\begin{theoremN}\label{thn:nil.quot}
  The following conditions on a connected Lie group $\bG$ are equivalent.

  \begin{enumerate}[(a),wide]
  \item\label{item:thn:nil.quot:s.res} For some (equivalently, all sufficiently large) $d\in \bZ_{\ge 0}$ the set
    \begin{equation}\label{eq:fgnfg}
      \left\{\left(s_i\right)_{i=1}^d\ :\ \overline{\Gamma:=\Braket{s_i}_i}=\bG,\ \Gamma^{(1)}\text{ not f.g.}\right\}
      \subseteq
      \bG^d
    \end{equation}
    is residual in the set of topologically-generating $d$-tuples, which is itself non-empty and cluster-residual. 
    
  \item\label{item:thn:nil.quot:s.ne} For some (equivalently, all sufficiently large) $d\in \bZ_{\ge 0}$ the set \Cref{eq:fgnfg} is non-empty. 

  \item\label{item:thn:nil.quot:nnil} The largest solvable quotient $\bG/\bG^{((\infty))}$ of $\bG$ is non-nilpotent, or $\bG=\bG^{((\infty))}\ne \{1\}$. 
  \end{enumerate}
\end{theoremN}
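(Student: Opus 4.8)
The plan is to run the cycle (a) $\Rightarrow$ (b) $\Rightarrow$ (c) $\Rightarrow$ (a), using the structural condition (c) as the hinge. Concretely, I read (c) as the failure of nilpotency of $\bG$: its first disjunct says the largest solvable quotient $\bG/\bG^{((\infty))}$ is non-nilpotent, while the second says the topologically perfect core $\bG^{((\infty))}$ is non-trivial, i.e. $\bG$ is non-solvable. Thus (c) holds precisely when $\bG$ is non-nilpotent, and the whole point is that non-nilpotency is exactly what forces the generic failure of finite generation of $\Gamma^{(1)}$.

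The implication (a) $\Rightarrow$ (b) is immediate: the set of topologically-generating $d$-tuples is non-empty and cluster-residual, hence a Baire space in the subspace topology, so a subset residual in it cannot be empty. For the necessity direction (b) $\Rightarrow$ (c) I argue contrapositively. If $\bG$ is nilpotent, then any $d$-generated $\Gamma\le\bG$ is a finitely-generated nilpotent group, hence polycyclic, hence Noetherian, so every subgroup — in particular $\Gamma^{(1)}$ — is finitely generated; thus the set \cref{eq:fgnfg} is empty for all $d$ and (b) fails. Since a nilpotent $\bG$ has $\bG^{((\infty))}=\{1\}$ and nilpotent largest solvable quotient $\bG/\bG^{((\infty))}=\bG$, nilpotency is exactly the negation of (c).

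For the sufficiency direction (c) $\Rightarrow$ (a) I split into the two ways $\bG$ can be non-nilpotent. If $\bG$ is non-solvable ($\bG^{((\infty))}\ne\{1\}$), pass to the non-trivial semisimple quotient $\bG\twoheadrightarrow\bG/\rad\bG$; the $d$-tuples whose images there generate a non-abelian free group form a residual set, because for each non-trivial word $w$ the solution set of $w=1$ is nowhere dense (the word map is non-constant, as a semisimple Lie group contains free subgroups) and there are only countably many words. On the intersection with the generating tuples, $\Gamma$ surjects onto a free group $F_k$ with $k\ge 2$, so $\Gamma^{(1)}$ surjects onto the infinitely-generated group $[F_k,F_k]$ and cannot be finitely generated. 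If instead $\bG$ is solvable but non-nilpotent, non-nilpotency yields a quotient $\bG\twoheadrightarrow \bT\ltimes\bV$ with $\bV$ a non-trivial vector group and $\bT$ acting through a weight $\chi$ of infinite image; for a generic tuple the image $\ol\Gamma$ is dense, the commutators $[\ol s_i,\ol s_j]$ have non-zero $\bV$-component, and conjugation by the $\bT$-parts rescales these by values of $\chi$, so the abelian group $\ol\Gamma^{(1)}\subseteq\bV$ contains the $\bZ[\chi(\ol\Gamma_\bT)^{\pm1}]$-span of a non-zero vector, which has infinite rank once $\chi$ attains a value of infinite multiplicative order (a residual condition). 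Hence $\ol\Gamma^{(1)}$, and a fortiori $\Gamma^{(1)}$, fails to be finitely generated. In both cases the good tuples form a residual subset of the generating tuples, which gives (a).

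The internal ``some $\Leftrightarrow$ all sufficiently large $d$'' clauses will follow from monotonicity: once $d$ reaches the minimal number of topological generators of $\bG$ the generating tuples are cluster-residual, and adjoining further coordinates preserves both the free-quotient and the non-zero-commutator conditions. The hard part will be the second mechanism: I expect the main obstacle to be showing that density of $\ol\Gamma$, infinite order of some value of $\chi$, and non-vanishing of the relevant commutators hold simultaneously on a residual set, and that the resulting $\bZ$-submodule of $\bV$ genuinely has infinite rank; choosing the correct weight quotient of a non-nilpotent solvable Lie group and reconciling these genericity requirements with topological generation of all of $\bG$ is where the Baire-category bookkeeping is most delicate.
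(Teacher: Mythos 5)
Your architecture coincides with the paper's: (a)$\Rightarrow$(b) is formal; (b)$\Rightarrow$(c) is the contrapositive via finite generation of derived subgroups of f.g.\ nilpotent groups (the paper runs an induction on the derived series using bilinearity of the commutator map where you invoke polycyclicity and the max condition --- both work); and (c)$\Rightarrow$(a) splits into the non-solvable case (Epstein-style genericity of free subgroups; the paper applies \cite{zbMATH03351914} directly to $\bG$ where you pass to $\bG/\rad\bG$) and the solvable non-nilpotent case (reduce to a metabelian picture and exploit the arithmetic of an eigenvalue of the adjoint action on the derived vector group; the paper's reduction goes through \Cref{le:splice} and the universal cover). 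One interpretive point: you silently read the second disjunct of \Cref{item:thn:nil.quot:nnil} as ``$\bG^{((\infty))}\ne\{1\}$'' rather than the literal ``$\bG=\bG^{((\infty))}\ne\{1\}$'', i.e.\ you take \Cref{item:thn:nil.quot:nnil} to mean ``$\bG$ is non-nilpotent''. That is in fact the reading under which the negation of \Cref{item:thn:nil.quot:nnil} is ``nilpotent'' (as your contrapositive requires), and it is also what your case split actually covers, since the free-subgroup argument applies to every non-solvable $\bG$ (e.g.\ $\mathrm{SL}_2(\bR)\times\bR$, which is neither topologically perfect nor has non-nilpotent solvable quotient), not only to topologically perfect ones; so your proof is internally consistent, but you should note that you are proving the theorem for this reading of (c).

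The one step that fails as written is the endgame of the solvable case: ``[the $\bZ[\chi(\ol\Gamma_\bT)^{\pm1}]$-span of a non-zero vector] has infinite rank once $\chi$ attains a value of infinite multiplicative order.'' This is false. If $z:=\chi(\gamma)$ is an algebraic \emph{unit} of infinite multiplicative order --- say $z=1+\sqrt2$ for a split weight, or a non-root-of-unity unit on the unit circle --- then $\bZ[z^{\pm1}]$ is a finitely generated $\bZ$-module, so $\bZ[z^{\pm1}]v$ is free abelian of finite rank and your mechanism detects nothing. Infinite multiplicative order is the condition relevant to density of $\ol\Gamma$, not to infinite generation of $\ol\Gamma^{(1)}$; what is needed is that $z$ or $z^{-1}$ fail to be an algebraic integer, and the clean residual condition guaranteeing this is transcendence of $z$ --- exactly what the paper uses (``generic elements' spectra will contain transcendental elements'', feeding into \cite[\S 6.4, Proposition 14]{serre_rep_1977} via \Cref{ex:semidir.dns}). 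The repair is cheap --- for each of the countably many algebraic numbers the corresponding level set of the non-constant analytic eigenvalue function is nowhere dense, so transcendence is residual --- but as stated your residual set is the wrong one, and this is precisely the point you yourself flagged as the remaining difficulty.
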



As a variation on the theme, the material in \Cref{se:ac} aims to correct what appears to me to be a fallacious claim in prior literature on Lie groups with dense \emph{sets} (not necessarily groups) of elliptic elements. Specifically, the issue is with passing from connected to disconnected Lie groups with finitely many components.

The introductory discussion on \cite[p.869]{MR1044968} quickly reduces the problem to semidirect products $\bL\rtimes \bK$ for compact $\bK$ and connected, simply-connected, nilpotent $\bL$, but the subsequent claim (supported by \cite[Theorem 2.10]{MR1044968}) that finite-component Lie groups $\bG$ meet the condition if and only if their identity components $\bG_0$ do appears to be incorrect. \Cref{ex:semidir.z2} (in turn elaborating on \cite[Example 1.5]{2506.09642v1}) illustrates this, \Cref{re:wu.ident.err} identifies one flaw in the proof of the aforementioned in \cite[Theorem 2.10]{MR1044968}, an amended statement that will cover disconnected Lie groups is proved below in the form of \Cref{thn:k.by.l} below. In the statement
\begin{itemize}[wide]
\item superscripts $X^{\bG}$ denote fixed-point sets of actions $\bG\circlearrowright X$ and similarly for group elements $s\in \bG$, as in $X^s$;

\item this applies to adjoint self-actions of groups: $\bG^s$ is the \emph{centralizer} of $s\in \bG$ in $\bG$;

\item and `0' superscripts, as usual, denote identity connected components of topological groups. 
\end{itemize}

\begin{theoremN}\label{thn:k.by.l}
  The following conditions on a semidirect product $\bG\cong \bL\rtimes \bK$ with compact $\bK$ and $\bL$ connected, simply-connected, solvable Lie are equivalent.
  
  \begin{enumerate}[(a),wide]
  \item\label{item:thn:k.by.l:ell.dns} The set of elliptic elements is dense in $\bG$.

  \item\label{item:thn:k.by.l:k.act} The set
    \begin{equation*}
      \left\{s\in \bK\ :\ \bL^{s}=\{1\}\quad\text{and/or}\quad\fl^{s}=\{0\}\right\}
      \subseteq
      \bK
      ,\quad
      \fl:=Lie(\bL)
    \end{equation*}
    is dense.

  \item\label{item:thn:k.by.l:k0.act} For every $\sigma\in \bK$ the set
    \begin{equation*}
      \left\{
        s\in \bK
        \ :\
        \bL^{s\sigma}=\{1\}
        \ \text{and/or}\ 
        \fl^{s\sigma}=\{0\}
      \right\}
      \subseteq
      \bK_0
    \end{equation*}
    is dense (equivalently, contains 1 in its closure).

  \item\label{item:thn:k.by.l:k0.act.comm} For every $\sigma\in \bK$ the set
    \begin{equation*}
      \left\{
        s\in \left(\bK_0^{\sigma}\right)_0
        \ :\
        \bL^{s\sigma}=\{1\}
        \ \text{and/or}\ 
        \fl^{s\sigma}=\{0\}
      \right\}
      \subseteq
      \left(\bK_0^{\sigma}\right)_0
    \end{equation*}
    is dense (equivalently, contains $1$ in its closure).

  \item\label{item:thn:k.by.l:k0.act.etor} For every $\sigma\in \bK$ there is an $s$-normalized maximal pro-torus $\bT\le \bK_0$ with
    \begin{equation}\label{eq:etor}
      \left\{
        s\in \left(\bT^{\sigma}\right)_0
        \ :\
        \bL^{s\sigma}=\{1\}
        \ \text{and/or}\ 
        \fl^{s\sigma}=\{0\}
      \right\}
      \subseteq
      \left(\bT^{\sigma}\right)_0
    \end{equation}
    dense (equivalently, contains $1$ in its closure).

  \item\label{item:thn:k.by.l:k0.act.utor} For every $\sigma\in \bK$ \Cref{eq:etor} holds for all $\sigma$-invariant maximal pro-tori of $\bK_0$.
  \end{enumerate}
\end{theoremN}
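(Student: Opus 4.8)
The plan is to establish the cycle by first settling the analytic heart \ref{item:thn:k.by.l:ell.dns}$\Leftrightarrow$\ref{item:thn:k.by.l:k.act}, and then peeling the density statement on $\bK$ down to the torus level through \ref{item:thn:k.by.l:k0.act}--\ref{item:thn:k.by.l:k0.act.utor} by component and conjugacy reductions. First I would record that, $\bL$ being contractible, $\bK$ is a maximal compact subgroup and every maximal compact subgroup is an $\bL$-conjugate of it; an element is elliptic precisely when it lies in some maximal compact subgroup, so the elliptic set is $E=\bigcup_{m\in\bL}m\bK m^{-1}$. A direct computation in $\bL\rtimes\bK$ gives $m\bK m^{-1}=\{(\psi_s(m),s):s\in\bK\}$ with $\psi_s(m):=m\cdot{}^s(m)^{-1}$, so the slice $E\cap(\bL\times\{s\})$ equals $\im\psi_s\times\{s\}$. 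I would then note that the two clauses of \ref{item:thn:k.by.l:k.act} coincide: a nonzero $X\in\fl^s$ exponentiates to a fixed one-parameter subgroup, so $\bL^s=\{1\}\Rightarrow\fl^s=\{0\}$; conversely $\fl^s=\{0\}$ forces $\bL^s$ discrete, and the fixed-point set of the semisimple automorphism $s$ on the simply-connected solvable $\bL$ is connected, whence $\bL^s=\{1\}$. Thus \ref{item:thn:k.by.l:k.act} is the density in $\bK$ of the single \emph{open} condition $\fl^s=\{0\}$.

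The analytic input is a constant-rank computation. From $\psi_s(xm)=x\,\psi_s(m)\,({}^sx)^{-1}$ (a composition of left and right translations applied to $\psi_s(m)$) one reads off that $\rk d\psi_s$ is constant on $\bL$, equal to $\rk\bigl((1-\Ad(s))|_\fl\bigr)=\dim\bL-\dim\fl^s$. When $\fl^s=\{0\}$ this rank is maximal, and an induction along the $s$-stable derived series of $\bL$ (base case the linear isomorphism $1-\Ad(s)$ on a vector group; inductive step a fibered diagram chase using simple connectedness) shows $\psi_s$ is a diffeomorphism, so the entire slice lies in $E$. When $\fl^s\neq\{0\}$ the constant rank is $<\dim\bL$, so by the constant-rank theorem $\im\psi_s$ is an immersed submanifold of positive codimension; the uniform dimension bound then forces $E\cap(\bL\times V)$ to be subanalytic of dimension $<\dim\bG$ over any open $V$ on which $\fl^s\neq\{0\}$, hence nowhere dense. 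Combining the two cases yields \ref{item:thn:k.by.l:ell.dns}$\Leftrightarrow$\ref{item:thn:k.by.l:k.act}.

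For the remaining equivalences I would exploit that the conditions are cut out by the real-analytic function $f_\sigma(s):=\det\bigl((1-\Ad(s\sigma))|_\fl\bigr)$: the non-vanishing locus of a real-analytic function on a connected analytic manifold is empty or dense, so on each of $\bK_0$, $(\bK_0^\sigma)_0$ and $(\bT^\sigma)_0$ the phrasings ``dense'', ``nonempty'' and ``contains $1$ in its closure'' coincide, which disposes of the parentheticals. Decomposing $\bK=\bigsqcup_\sigma\bK_0\sigma$ into cosets and writing coset elements as $s\sigma$ turns density of $\{s:\fl^s=\{0\}\}$ in $\bK$ into density of $\{s\in\bK_0:\fl^{s\sigma}=\{0\}\}$ in $\bK_0$ for every $\sigma$, giving \ref{item:thn:k.by.l:k.act}$\Leftrightarrow$\ref{item:thn:k.by.l:k0.act}. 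The descent \ref{item:thn:k.by.l:k0.act}$\Leftrightarrow$\ref{item:thn:k.by.l:k0.act.comm}$\Leftrightarrow$\ref{item:thn:k.by.l:k0.act.etor}$\Leftrightarrow$\ref{item:thn:k.by.l:k0.act.utor} rests on disconnected compact-group structure theory: $\fl^{s\sigma}=\{0\}$ depends only on the $\bK_0$-conjugacy class of $s\sigma$ in the component $\bK_0\sigma$ and is invariant under $(\bK_0^\sigma)_0$-conjugation; every element of $\bK_0\sigma$ is $\bK_0$-conjugate into $(\bT^\sigma)_0\sigma$ for a $\sigma$-stable maximal torus $\bT$ with $(\bT^\sigma)_0$ maximal in $(\bK_0^\sigma)_0$; and at least one such $\bT$ exists. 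Nonemptiness therefore passes freely between $\bK_0$, $(\bK_0^\sigma)_0$ and $(\bT^\sigma)_0$, with the step to \ref{item:thn:k.by.l:k0.act.utor} using that conjugacy of $\sigma$-stable maximal tori renders the torus-level statement independent of the chosen $\bT$.

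I expect the main obstacle to be this last block: making the passage into $(\bT^\sigma)_0\sigma$ precise---the disconnected-compact-group conjugacy theorem together with the identification of $(\bT^\sigma)_0$ as a genuine maximal torus of the fixed subgroup $(\bK_0^\sigma)_0$, which is exactly where an existence-versus-all-tori distinction separates \ref{item:thn:k.by.l:k0.act.etor} from \ref{item:thn:k.by.l:k0.act.utor}. The analytic Lemma (global diffeomorphy of $\psi_s$ when $\fl^s=\{0\}$) is a secondary point, but the derived-series induction renders it routine.
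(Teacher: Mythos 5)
Your architecture for (c)--(f) is sound and genuinely different from the paper's: where you invoke the twisted-conjugacy theorem (every element of $\bK_0\sigma$ is $\bK_0$-conjugate into $(\bT^{\sigma})_0\sigma$ for a $\sigma$-stable maximal torus $\bT$) together with the empty-or-dense dichotomy for the non-vanishing locus of the real-analytic function $s\mapsto\det\bigl((1-\Ad(s\sigma))|_{\fl}\bigr)$, the paper instead proves two lemmas: a submersion/implicit-function argument showing that $s\sigma$ with $s$ near $1$ in $\bK_0$ is conjugate to $u\sigma$ with $u$ near $1$ in $(\bK_0^{\sigma})_0$, and a reduction to central $\sigma$ (via the observation that a $\sigma$-invariant maximal torus contains a maximal torus of $(\bK_0^{\sigma})_0$) followed by the covering of $\bK_0$ by conjugates of $\bT$. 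Your route is arguably cleaner, at the price of importing the disconnected-compact-group conjugacy theorem; the paper's is more self-contained. For (a)$\Leftrightarrow$(b) the paper simply cites Lemma 1.7 and the end of the proof of Theorem A of \cite{2506.09642v1}, whereas you re-prove it. Two things you gloss over that the paper must (and does) handle: the reduction from general compact $\bK$ to Lie $\bK$ (your coset decomposition $\bK=\bigsqcup_{\sigma}\bK_0\sigma$ and all of your manifold/analyticity arguments require $\bK/\bK_0$ finite and $\bT$ an honest torus; the paper passes to a Lie quotient by a compact normal subgroup and, for the pro-torus statements, takes projective limits), and the existence of $\sigma$-invariant maximal pro-tori, which is exactly what keeps (f) from being vacuously true and which the paper derives from Steinberg's theorem plus a limit argument.

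The one genuine gap is in your (a)$\Rightarrow$(b) step. You claim that on an open $V\subseteq\bK$ where $\fl^s\neq\{0\}$ the set $E\cap(\bL\times V)=\bigcup_{s\in V}\im\psi_s\times\{s\}$ is ``subanalytic of dimension $<\dim\bG$, hence nowhere dense.'' But $\Psi(m,s)=(\psi_s(m),s)$ is not a proper map (when $\fl^s\neq\{0\}$ the map $\psi_s$ collapses entire cosets), so its image need not be subanalytic; what the constant-rank theorem actually delivers is that the image is a countable union of compact, lower-dimensional, nowhere-dense pieces, i.e.\ meager --- and meager sets can perfectly well be dense. To conclude that $E$ misses an open set you must additionally control how the images $\im\psi_s$ vary with $s$: for instance, shrink $V$ to a neighborhood on which $\rk\bigl((1-\Ad(s))|_{\fl}\bigr)$ is locally constant, show that the closures $\overline{\im\psi_s}$ assemble into a closed positive-codimension subset of $\bL\times V$, and then use that for open $U$ one has $\overline{E}\cap U=\overline{E\cap U}\cap U$. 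This is exactly the computation the paper carries out explicitly in its two-component example $V\rtimes(\bS^1\rtimes\bZ/2)$, and it is the substance of the cited Lemma 1.7; as written, your dimension count does not yet deliver it.
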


Recall \cite[Definitions 9.30]{hm5} that \emph{pro-tori} are compact, connected, abelian groups; they exist and are all mutually conjugate in compact connected groups \cite[Theorem 9.32]{hm5}. It is a fact, presumably well-known and useful (albeit in weaker form) in the proof of \Cref{thn:k.by.l}, that an automorphism of a compact group always leaves invariant \emph{some} maximal pro-torus. For completeness and whatever intrinsic interest it may possess, we record this as \Cref{th:all.autos.fix.pro-tori}. 

\subsection*{Acknowledgments}

I am grateful for insightful and stimulating correspondence with E. Breuillard and T. Gelander.


\section{On and around finite generation in Lie groups}\label{se:fg}

In reference to extracting free dense subgroups from arbitrary f.g. subgroups of connected Lie groups, we remind the reader that \cite[Theorem 1.3]{zbMATH01903603} does precisely this for non-solvable connected Lie groups. The proof relies on  \cite[Proposition 2.7]{zbMATH01903603}, which seems to me to suffer from a slight (and fixable \cite{bg_err_pre}) issue.

That result considers a finitely-generated dense subgroup $\Gamma\le \bG$ of a connected Lie group and, in that generality (see that proof's second paragraph), asserts that the commutator subgroup $\Gamma^{(1)}$ is dense and \emph{again finitely-generated} in the closed derived subgroup $\bG^{((1))}\le \bG$. Density is of course not a problem, but finite generation possibly is: in general, the derived subgroup of an f.g. group need not be f.g. The preeminent examples are non-abelian free groups $\Gamma$ (which the $\Gamma$ of \cite[Proposition 2.7]{zbMATH01903603} might well be): their commutator subgroups are always free on infinitely many generators \cite[Theorem 11.48]{rot-gp}.

One can even arrange for $\Gamma$ to be solvable; simply consider the universal \emph{metabelian} \cite[Definitions pre \S 5.5.1]{rob_gps_2e_1995} group on $n\ge 2$ generators: the quotient $F_n/F_n^{(2)}$ of the free group $F_n$ on $n$ generators. See also \Cref{ex:semidir.dns} for instances of this phenomenon in the specific context of \cite[Proposition 2.7]{zbMATH01903603}, with $\Gamma$ embedded densely in a solvable connected Lie group. 

\begin{example}\label{ex:semidir.dns}
  Consider the 3-dimensional solvable Lie group $\bG:=\bR^2\rtimes \bS^1$, with the circle acting on the plane via its usual identification $\bS^1\cong \mathrm{SO}(2)$ as the rotation group. Or: $\bS^1\subset \bC$ acts on $\bC\cong \bR^2$ by multiplication. Let $\Gamma\le \bG$ be the 2-generated group
  \begin{equation}\label{eq:ex:semidir.dns:gammas}
    \Gamma:=\Braket{v,z}
    ,\quad
    0\ne v\in \bC\le \bC\rtimes \bS^1
    ,\quad
    z=\exp(2\pi i \theta)\in \bS^1\subset \bC
  \end{equation}
  for $\theta\in \bR$. The derived subgroup $\Gamma^{((1))}$ is precisely the $\bZ$-span of all $z^n v-v$, $n\in \bZ$. We have
  \begin{align*}
    \Gamma
    &=
      \left(\sum_{n\in \bZ}\bZ z^n v\right)\rtimes \left\{z^m\ :\ m\in \bZ\right\}
      \le \bC\rtimes \bS^1\\
    \Gamma^{(1)}
    &=
      \sum_{n\in \bZ} \bZ\left(v-z^n v\right),\numberthis\label{eq:ex:semidir.dns:g1}
  \end{align*}
  and hence:
  \begin{itemize}[wide]
  \item $\Gamma$ is dense in $\bG$ precisely when $\theta$ is irrational (so that $z$ is not a root of unity);
    
  \item while $\Gamma^{(1)}$ fails to be finitely-generated precisely \cite[\S 6.4, Proposition 14]{serre_rep_1977} when $z$ is not an \emph{algebraic integer}: a (strictly \cite[Remark post Lemma 1.6]{wash_cycltm_2e_1997}) stronger condition than not being a root of unity, but still a ``generic'' (i.e. residual) one.
  \end{itemize}
\end{example}

\begin{remarks}\label{res:exists.dns.drv}
  \begin{enumerate}[(1),wide]
  \item\label{item:res:exists.dns.drv:are.fg} As a follow-up on \Cref{ex:semidir.dns}, note that even when $z$ is not an algebraic integer (so that $\Gamma^{(1)}\le \bG^{((1))}$ is dense but not finitely-generated), there frequently are finitely-generated subgroups of $\Gamma^{(1)}$ dense in $\bG^{((1))}$. This is the case, for instance, whenever $z$ is transcendental. 

    Recall \cite[Exercise 49]{kap} that the \emph{rank} of an abelian group is
    \begin{equation*}
      \rk\Gamma
      :=
      \inf\left\{d\in \bZ_{\ge 0}
        \ :\
        \forall\text{ f.g. }\Lambda\le \Gamma\text{ is $d$-generated}\right\}.
    \end{equation*}
    $z$ being transcendental, subgroups $\Braket{F}\le \Gamma^{(1)}$ generated by sufficiently large finite subsets
    \begin{equation*}
      F\subseteq \left\{v-z^n v\ :\ n\in \bZ\right\}
    \end{equation*}
    have ranks larger than 2 so cannot be discrete. If \emph{all} failed to be dense, then the identity components $\left(\overline{\Braket{F}}\right)_0$ would stabilize, for containment-wise sufficiently large $F$, to a line $\ell\le \bC$. That line would however have to be invariant under multiplication by $z$, which is impossible.
    
    \Cref{ex:alg.not.int} below is concerned precisely with the distinction between plain algebraic and algebraic integral $z$.

  \item\label{item:res:exists.dns.drv:resid} Concerning the last point made in \Cref{ex:semidir.dns}, note also that the set of generator pairs \Cref{eq:ex:semidir.dns:gammas} for which the resulting $\Gamma$ \emph{does} contain an f.g. subgroup $\Lambda\le \Gamma^{(1)}$ dense in $\bG^{((1))}$ is residual. 
  \end{enumerate}  
\end{remarks}

As the portion of the proof of \cite[Proposition 2.7]{zbMATH01903603} affected by the apparent gap noted in the discussion preceding \Cref{ex:semidir.dns} substitutes the largest solvable quotient $\bG/\bG^{((\infty))}$ for $\bG$, one might attempt a patch by showing that whenever $\Gamma\le \bG$ is a finitely-generated dense subgroup of a connected solvable Lie group, $\Gamma^{(1)}$ at least \emph{contains} a dense subgroup of the closed derived subgroup $\bG^{((1))}\le \bG$. That this is not so is illustrated by the specialization of \Cref{ex:semidir.dns} to the ``intermediate'' case of $z$ being algebraic but not integrally so. 

\begin{example}\label{ex:alg.not.int}
  In \Cref{ex:semidir.dns}, take for $z$ any complex number of the form
  \begin{equation*}
    z:=\frac ac+\frac bc i
    ,\quad
    (a,b,c)\in \bZ^3_{>0}
    \text{ a \emph{Pythagorean triple}},
  \end{equation*}
  the latter phrase meaning (as it usually does \cite[\S 2.6]{lng_nt}) that $a^2+b^2=c^2$. A specific example would be $(a,b,c)=(3,4,5)$, mentioned also in \cite[Remark following Lemma 1.6]{wash_cycltm_2e_1997}.

  For non-zero $v\in \bC\cong \bR^2$ the elements
  \begin{equation*}
    \left\{z^n v\ :\ n\in F\right\}
    ,\quad
    \text{finite }F\subseteq\bZ
  \end{equation*}
  $\bZ$-span a discrete subgroup of $\bC$ (a lattice if $F$ is sufficiently large), so no f.g. subgroup of \Cref{eq:ex:semidir.dns:g1} will be dense in $\bG^{((1))}$. 
\end{example}

\begin{remark}\label{re:fg.nec}
  There are qualitatively different examples showing that a dense $\Gamma\le \bG$ (for connected metabelian $\bG$) certainly need not contain f.g. subgroups $\Lambda\le \Gamma^{(1)}$ dense in $\bG^{((1))}$ if $\Gamma$ was not dense to begin with.
  
  The \emph{Heisenberg group} \cite[\S 10.1]{de}
  \begin{equation*}
    \bH(\bR)
    :=
    \left\{
      \begin{pmatrix}
        1&a&b\\
        0&1&c\\
        0&0&1
      \end{pmatrix}
      \ :\
      a,b,c\in \bR
    \right\}
    \le
    \mathrm{GL}_3(\bR)
  \end{equation*}
  has the analogously-defined $\bH(\bQ)$ as a dense subgroup, with no finitely-generated subgroup of $\bH(\bQ)^{(1)}\cong \bQ$ dense in $\bH(\bR)^{((1))}=\bH(\bR)^{(1)}\cong \bR$.
\end{remark}

The large size of the ``pathological'' set of generating tuples (i.e. those rendering $\Gamma^{(1)}$ non-f.g.) raises the natural question of when and to what extent that phenomenon obtains in general. The following result provides a classification of the connected Lie groups for which \Cref{ex:semidir.dns} replicates.

\pf{thn:nil.quot}
\begin{thn:nil.quot}
  \Cref{item:thn:nil.quot:s.res} obviously implies \Cref{item:thn:nil.quot:s.ne}, and the latter in turn implies \Cref{item:thn:nil.quot:nnil} because for \emph{nilpotent} groups finite generation transports over from $\Gamma$ to $\Gamma^{(1)}$: this is easily seen by induction on the length of the derived series $\left(\Gamma^{(n)}\right)_{n}$ upon observing that whenever $\Gamma^{(1)}$ is central the commutator map
  \begin{equation*}
    \Gamma\times \Gamma
    \ni
    (\gamma,\gamma')
    \xmapsto{\quad}
    [\gamma,\gamma']
    \in \Gamma^{(1)}\le Z(\Gamma)
  \end{equation*}
  is bilinear (i.e. factors through a morphism $\left(\Gamma/\Gamma^{(1)}\right)^{\otimes 2}\to \Gamma^{(1)}$). It thus remains to prove \Cref{item:thn:nil.quot:nnil} $\Rightarrow$ \Cref{item:thn:nil.quot:s.res}.
  
  The cluster-residual character of the set of topologically-generating $d$-tuples for $d\gg 0$ follows from \cite[Lemmas 6.3 and 6.4]{zbMATH05117945} for arbitrary connected Lie groups. The source also proves that the ``cluster'' qualifier can be dropped in the solvable case. We will thus seek to show that for $d\gg 0$ the infinite generation of $\Gamma^{(1)}$ for $d$-generated dense $\Gamma$ is a generic property.

  \begin{enumerate}[(I),wide]
  \item\textbf{: $\bG=\bG^{((\infty))}\ne \{1\}$.} The set of topologically-generating $d$-tuples in $\bG$ is open \cite[Lemma 6.4]{zbMATH05117945}, and that of tuples generating \emph{free} (necessarily non-abelian) subgroups is residual therein \cite[Theorem]{zbMATH03351914}. The conclusion follows, given that the derived group of a non-abelian free group is not f.g.. 

  \item\textbf{: $\bG/\bG^{((\infty))}$ solvable non-nilpotent.} It will be enough to assume $\bG$ metabelian, for $\fg$ will be nilpotent whenever $\fg/\fg^{(2)}$ is (\Cref{le:splice} with $\fj:=\fg^{(1)}$ and $\fk:=\fg^{(2)}$). There is also no harm in passing to the \emph{universal cover} \cite[Theorem 7.7]{lee2013introduction} of $\bG$, thus assuming $\bG^{((1))}$ and $\bG/\bG^{((1))}$ vector groups.


    Recall the observation made in the proof of \cite[Lemma 6.4]{zbMATH05117945}, to the effect that for generic $x_j\in \bG$, $1\le j\le d\gg 0$ the elements $\Ad_{x_1} x_j$ span $\bG^{((1))}\cong \bR^{\ell}$. The assumed non-nilpotence means precisely that the adjoint action of $\bG$ on $\fg$ is not unipotent, so in particular generic elements' spectra will contain transcendental elements. It follows that
    \begin{equation*}
      \left\{(x_j)_{j=1}^d\ :\ \Ad_{x_1}|_{\bG^{((1))}}\text{ not integral}\right\}
      \subseteq
      \bG^d
    \end{equation*}
    is residual, so the generic choice of $(x_j)$ will yield non-f.g. subgroups
    \begin{equation*}
      \sum_j \sum_{m=1}^{\infty} \bZ\left(\Ad_{x_1}x_j-\Ad_{x_1}^m x_j\right)
      \le
      \Braket{(x_j),\ 1\le j\le d}^{(1)}
    \end{equation*}
    precisely as in \Cref{ex:semidir.dns}. The latter (abelian) group, then, must itself be non-f.g.  \qedhere
  \end{enumerate}
\end{thn:nil.quot}


While extensions of nilpotent Lie algebras of course need not be nilpotent (e.g. non-nilpotent solvable Lie algebras in characteristic 0), it is nevertheless the case that ``splicing together'' two nilpotent algebras with ``sufficiently large overlap'' will produce a nilpotent algebra. The following simple observation, formalizing this intuition, must be well-known; it is included here for completeness, and because it seems difficult to locate in the literature in precisely this formulation.

\begin{lemma}\label{le:splice}
  A finite-dimensional Lie algebra $\fg$ over an arbitrary field is nilpotent if and only if it has nilpotent ideals $\fk\le \fj^{(1)}\le \fj$ with nilpotent quotient $\fg/\fk$. 
\end{lemma}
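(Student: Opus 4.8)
The plan is to dispatch the easy implication directly and reduce the other to Engel's theorem. If $\fg$ is nilpotent, one takes $\fj:=\fg$ and $\fk:=\{0\}$, so that $\fk\le\fj^{(1)}\le\fj$ with $\fj,\fk$ nilpotent and $\fg/\fk=\fg$ nilpotent. For the converse, assume the displayed configuration. By Engel's theorem it is equivalent to show that $\operatorname{ad}_x$ is nilpotent on $\fg$ for every $x\in\fg$. Since an operator preserving an ideal $W\trianglelefteq\fg$ and nilpotent both on $W$ and on $\fg/W$ is nilpotent on $\fg$, and since $\fg/\fk$ being nilpotent already forces $\operatorname{ad}_x$ to be nilpotent on $\fg/\fk$, it suffices to prove that $\operatorname{ad}_x$ is nilpotent on $\fk$. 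Because $\fk\le\fj^{(1)}\le\fj$ with $\fj$ an ideal, $\operatorname{ad}_x$ preserves $\fj$ and restricts there to a derivation $D:=\operatorname{ad}_x|_{\fj}$ of the nilpotent algebra $\fj$; it is then enough to show $D$ is nilpotent on all of $\fj$ (hence on the invariant subspace $\fk$).

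The core is thus a self-contained assertion: a derivation $D$ of a finite-dimensional nilpotent Lie algebra $\fj$ that is nilpotent on the abelianization $\fj/\fj^{(1)}$ is nilpotent on $\fj$. I would run this along the lower central series $\fj=\fj^{[0]}\supseteq\fj^{[1]}=\fj^{(1)}\supseteq\cdots\supseteq\fj^{[c]}=\{0\}$, each term being $D$-invariant as a characteristic ideal. For each $i\ge1$ the bracket furnishes a $D$-equivariant surjection
\[
  \left(\fj/\fj^{[1]}\right)\otimes\left(\fj^{[i-1]}/\fj^{[i]}\right)\twoheadrightarrow\fj^{[i]}/\fj^{[i+1]},
\]
with $D$ acting on the source as $D\otimes 1+1\otimes D$; this is well-defined because $[\fj^{[a]},\fj^{[b]}]\subseteq\fj^{[a+b+1]}$. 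As a sum of two commuting nilpotent operators is nilpotent, an induction on $i$ propagates nilpotence of $D$ from the base piece $\fj/\fj^{[1]}$ to every graded piece $\fj^{[i]}/\fj^{[i+1]}$, and finiteness of the series then yields nilpotence of $D$ on $\fj$.

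It remains to secure the base case, which is exactly where the overlap hypothesis $\fk\le\fj^{(1)}$ enters. Since $\fk\le\fj^{(1)}$, the space $\fj/\fj^{(1)}$ is a $D$-equivariant quotient of $\fj/\fk$, and $D$ is nilpotent on $\fj/\fk$ because the latter is an $\operatorname{ad}$-invariant subspace of the nilpotent algebra $\fg/\fk$. Assembling the pieces, $D$ is nilpotent on $\fj\supseteq\fk$, so $\operatorname{ad}_x$ is nilpotent on $\fk$ and on $\fg/\fk$, hence on $\fg$, for every $x\in\fg$; Engel's theorem then gives that $\fg$ is nilpotent.

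The one genuine obstacle is that extensions of nilpotent algebras by nilpotent algebras need not be nilpotent, so the argument must use the overlap condition $\fk\le\fj^{(1)}$ essentially rather than merely the three nilpotency hypotheses: it is precisely $\fk\le\fj^{(1)}$ that pins $D$ down on the abelianization $\fj/\fj^{(1)}$ and lets the ``generation by brackets'' bootstrap carry nilpotence through the whole lower central series. I expect this base-case-plus-propagation step to be the crux of the proof; it is also worth noting that the hypothesis that $\fk$ itself be nilpotent is not actually needed once $D$ has been shown nilpotent on $\fj$, since $\fk$ is then simply an invariant subspace of $\fj$.
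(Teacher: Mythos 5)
Your proof is correct and follows essentially the same route as the paper's: an induction along the lower central series of $\fj$ via the bracket-induced surjections $\left(\fj/\fj^{[1]}\right)\otimes\left(\fj^{[i-1]}/\fj^{[i]}\right)\twoheadrightarrow\fj^{[i]}/\fj^{[i+1]}$, seeded by the observation that $\fk\le\fj^{(1)}$ makes $\fj/\fj^{[1]}$ a nilpotent quotient of the $\fg$-module $\fj/\fk$. The only cosmetic differences are that the paper keeps the full $\fg$-module (rather than single-derivation) language, tensors with $\fj/\fk$ instead of $\fj/\fj^{[1]}$, and phrases the nilpotence of $D\otimes 1+1\otimes D$ via primitive elements of $U\fg$ acting on tensor products.
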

\begin{proof}
  Only the backward implication is of any substance. The assumed nilpotence of $\fj$ means that the lower central series $(\fj^{[n]})_n$ eventually terminates at $\{0\}$; it thus suffices \cite[\S 3.3, Theorem]{hmph_1980} to prove the adjoint action nilpotent on each $\fj^{[n]}/\fj^{[n+1]}$, the assumption being that it is so for $n=0$:
  \begin{equation*}
    \fj/\fk
    \xrightarrowdbl[\quad\text{$\fg/\fk$ nilpotent}\quad]{\quad\text{$\fg$-module surjection}\quad}
    \fj/\fj^{(1)}
    =
    \fj/\fj^{[1]}.
  \end{equation*}
  The conclusion follows inductively, given the $\fg$-module morphisms
  \begin{equation*}
    \left(\fj/\fk\right)\otimes \left(\fj^{[n]}/\fj^{[n+1]}\right)
    \xrightarrowdbl[\quad]{\quad[-,-]\quad}
    \fj^{[n+1]}
    /
    \left(
      \left[\fk,\fj^{[n]}\right]
      +
      \fj^{[n+2]}
    \right)
    =
    \fj^{[n+1]}
    /    
    \fj^{[n+2]}:
  \end{equation*}
  every $x\in \fg$ is \emph{primitive} \cite[Definition 1.3.4]{mont} in the enveloping Hopf algebra $U\fg$, and a primitive element acting nilpotently in $V$ and $W$ is easily seen to do so in $V\otimes W$.
\end{proof}

\section{(Almost-)connectedness and its effects on compact-element density}\label{se:ac}

\emph{Almost-connected} topological groups (always assumed Hausdorff) , as usual \cite[Abstract]{hm_pro-lie-struct}, are those whose quotient $\bG/\bG_0$ by the identity connected component is compact. \cite[Theorem 2.10]{MR1044968} states that an almost-connected Lie group $\bG$ has a dense set of elliptic elements (i.e. $\bG$ is \emph{almost-elliptic} in the language of \cite[Definition 0.1(3)]{2506.09642v1}) if and only if its identity component $\bG_0$ does (a note added in proof also credits \cite{zbMATH03747435} with this result, but I cannot find this statement there). The non-obvious implication
\begin{equation*}
  \bG_0\text{ almost-elliptic}
  \xRightarrow{\quad}
  \bG\text{ almost-elliptic}
\end{equation*}
appears to me to be false; this is noted in passing in \cite[Remark 1.6 and Example 1.5]{2506.09642v1}, and \Cref{ex:semidir.z2} below instantiates that phenomenon more explicitly.

\begin{example}\label{ex:semidir.z2}
  Take for $\bG$ the two-component group $V\rtimes \left(\bS^1\rtimes \bZ/2\right)$, where
  \begin{itemize}[wide]
  \item $V\cong \bC^2$ is the 2-dimensional representation $\chi_1\oplus \chi_{-1}$ of the circle factor $\bS^1$, with
    \begin{equation*}
      \chi_n
      :=
      \left(\bS^1\ni z\xmapsto{\quad}z^n\in \bS^1\right)
      \in
      \widehat{\bS^1}\cong \bZ
    \end{equation*}
    denoting the circle's characters;

  \item and $\bZ/2=\Braket{\sigma}$ operating on $\bS^1$ by inversion and on $V$ by interchanging two fixed basis elements $v_{\pm}\in V_{\pm}$, the latter being the summands of $V$ respectively carrying the characters $\chi_{\pm}$. 
  \end{itemize}
  The maximal compact subgroups of $\bG$ are \cite[\S VII.3.2, Proposition 3]{bourb_int_7-9_en} the conjugates
  \begin{equation}\label{eq:wtw}
    \left\{\left(w-tw,t\right)\in V\rtimes \left(\bS^1\rtimes \bZ/2\right)\ :\ t\in \bS^1\rtimes \bZ/2\right\}
    \quad\text{for varying}\quad
    w\in V,
  \end{equation}
  so the closure of the set of elliptic elements consists of $(v,t)$ with $v\in \im (1-t)$. As all $t$ in the non-identity component $\bS^1 \sigma$ of $\bS^1\rtimes \bZ/2$ are mutually-conjugate involutions with (complex-)1-dimensional eigenspaces, no
  \begin{equation*}
    (v,t)\in V\times \bS^1\sigma
    ,\quad
    tv=v\ne 0
  \end{equation*}
  is arbitrarily approachable by elements of the form \Cref{eq:wtw}. 
\end{example}

\begin{remark}\label{re:wu.ident.err}
  It will be helpful, in addition to providing \Cref{ex:semidir.z2} above, to also identify the apparent flaw in the proof of \cite[Theorem 2.10]{MR1044968}. One problem that invalidates the argument as presented occurs in the second paragraph of the proof.

  That discussion starts with a compact Lie group $\bK$ and a finite cyclic $\bD=\Braket{d}$ normalizing it (much like the $\bK:=\bS^1$ and $\bD:=\bZ/2$ of \Cref{ex:semidir.z2}). Representing $\bK\cdot \bD$ on a vector space $V$ (again, as in \Cref{ex:semidir.z2}), the claim is that for generic $t$ in a maximal torus of $\bK$ the operator $d^{-1}-t$ on $V$ is invertible (equivalently: $1-dt$ is invertible). The example already shows that this need not be so, as $1-dt$, there, are all scalar multiples of (complex) rank-1 idempotents.
  
  The issue with the proof is the claim that as soon as $d^{-1}$ and $t$ agree on some $v$, all of their respective powers $(d^{-1})^{\ell}$ and $t^{\ell}$ do as well, for $\ell\in \bZ_{>0}$. There is no reason why this would be so (unless $d$ and $t$ in fact \emph{commute}, which is not the standing assumption):
  \begin{equation*}
    \begin{pmatrix}
      0&1\\
      1&0
    \end{pmatrix}
    \quad\text{and}\quad
    \begin{pmatrix}
      z&0\\
      0&z^{-1}
    \end{pmatrix}
    ,\quad
    z\in \bS^1
  \end{equation*}
  agree on $
  \begin{pmatrix}
    x\\zx
  \end{pmatrix}
  $ for every $x\in \bC^{\times}$, while their squares do not unless $z=\pm 1$. 
\end{remark}


Recall, for context, that all locally compact groups fitting into an extension
\begin{equation*}
  \{1\}\to
  \bL
  \lhook\joinrel\xrightarrow{\quad}
  \bG
  \xrightarrowdbl{\quad}
  \bK
  \to\{1\}
\end{equation*}
with compact $\bK$ and connected, simply-connected, solvable $\bL$ split as $\bG\cong \bL\rtimes \bK$ \cite[Exercise 19 for \S III.9, p.401]{bourb_lie_1-3}. For such groups \Cref{thn:k.by.l} gives an almost-ellipticity criterion (very much in the spirit of its analogue \cite[Theorem 2.7]{MR1044968} for extensions of compact \emph{connected} Lie groups by simply-connected nilpotent ones).

\pf{thn:k.by.l}
\begin{thn:k.by.l}
  That in each of \Cref{item:thn:k.by.l:k0.act}-\Cref{item:thn:k.by.l:k0.act.etor} the two respective versions are equivalent is clear from the invariance of the sets in question under left multiplication by $\bK_0$, $\left(\bK_0^{\sigma}\right)_0$ or $\left(\bT^{\sigma}\right)_0$.
  
  \begin{enumerate}[label={},wide]
  \item\textbf{\Cref{item:thn:k.by.l:ell.dns} $\Leftrightarrow$ \Cref{item:thn:k.by.l:k.act}:} \cite[Lemma 1.7]{2506.09642v1} and the concluding portion of the proof of \cite[Theorem A]{2506.09642v1} combine to prove the equivalence, for the connectedness of $\bK$ is not essential in those arguments.

  \item\textbf{\Cref{item:thn:k.by.l:k.act} $\Leftrightarrow$ \Cref{item:thn:k.by.l:k0.act}:} This is immediate when $\bK$ is Lie (so a finite union of its connected components), which we can always assume by passing to a Lie quotient by a compact normal subgroup of $\bG$.

  \item\textbf{\Cref{item:thn:k.by.l:k0.act.etor} $\Rightarrow$ \Cref{item:thn:k.by.l:k0.act.comm} $\Rightarrow$ \Cref{item:thn:k.by.l:k0.act}} are formal.

  \item\textbf{\Cref{item:thn:k.by.l:k0.act.utor} $\Rightarrow$ \Cref{item:thn:k.by.l:k0.act.etor}:} This follows from the fact that there always are $s$-invariant maximal pro-tori in $\bK_0$:
    \begin{itemize}[wide]
    \item \cite[Theorem 7.5]{stein_endo} gives a linear-algebraic version;

    \item that transports over to compact Lie groups by the correspondence \cite[Chapter VI, \S VIII, Definition 3]{chev_lie-bk_1946} between these and complex linear algebraic groups;

    \item whence the general claim by the usual \cite[Corollary 2.36]{hm5} device of passing to a limit. 
    \end{itemize}
    
  \item\textbf{\Cref{item:thn:k.by.l:k0.act} $\Rightarrow$ \Cref{item:thn:k.by.l:k0.act.utor}:} The claim is an entirely representation-theoretic one, relegated to \Cref{le:s.cent.tor} below with that statement's $V$ in place of the original $\fl$.  \qedhere
  \end{enumerate}
\end{thn:k.by.l}

In particular, and much as \cite[Lemma 2.9]{MR1044968} suggests, in assessing whether or not an almost-connected locally compact group $\bG$ has a dense set of elliptic elements on e need only consider subgroups of the form $\bG_0\cdot \bD$ for \emph{monothetic} (i.e. \cite[p.254]{zbMATH03104699} topologically cyclic) compact $\bD\le \bG$.

\begin{corollary}\label{cor:mnth.sbgp}
  Let $\bG$ be an almost-connected locally compact group. The set of elliptic elements is dense in $\bG$ if and only if this is so for every closed subgroup of the form $\bG_0\cdot\Braket{d}$ for arbitrary $d\in \bG$. 
\end{corollary}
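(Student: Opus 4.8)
The plan is to treat the two implications separately, dealing with the ``if'' direction first since it needs no structure theory and works for every almost-connected locally compact $\bG$. I would use that ellipticity is intrinsic to closed subgroups: if $H\le\bG$ is closed and $h\in H$, then $\overline{\Braket{h}}$ is the same whether formed in $H$ or in $\bG$, so the elliptic elements of $H$ are exactly the elliptic elements of $\bG$ that lie in $H$. Given any $g\in\bG$, the closed subgroup $\bG_0\cdot\overline{\Braket{g}}$ contains $g$, so if its elliptic elements are dense then $g$ lies in the closure of the elliptic locus of $\bG$. As $g$ ranges over $\bG$, density in all the subgroups $\bG_0\cdot\overline{\Braket{d}}$ forces density in $\bG$.

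For the ``only if'' direction I would route everything through \Cref{thn:k.by.l}. First I would reduce, by the standard device recalled in the Introduction (Wu's reduction, the splitting of compact-by-(connected simply-connected solvable) extensions, and the passage to Lie quotients by compact normal subgroups followed by a limit, exactly as in the proof of \Cref{thn:k.by.l}), to the case $\bG\cong\bL\rtimes\bK$ with $\bG_0=\bL\rtimes\bK_0$. The structural observation that makes the corollary go through is that $H:=\bG_0\cdot\overline{\Braket{d}}$ shares its identity component with $\bG$: since $\bG/\bG_0$ is compact and totally disconnected, $H/\bG_0$ is a procyclic (monothetic) quotient of $\overline{\Braket{d}}$, whence $H_0=\bG_0$. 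Consequently $H$ is modelled by $\bL\rtimes\bK'$ with the \emph{same} $\bL$ and $\bK_0$, where $\bK'$ is the image of $H$ in $\bK=\bG/\bL$; one checks $\bK_0\le\bK'\le\bK$ with $(\bK')_0=\bK_0$ and $\bK'/\bK_0$ procyclic.

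The conclusion then becomes a quantifier restriction in criterion \Cref{item:thn:k.by.l:k0.act}. That criterion characterizes almost-ellipticity of $\bL\rtimes\bK$ by requiring that for every $\sigma\in\bK$ a certain subset of $\bK_0$, cut out by the fixed-point conditions $\bL^{s\sigma}=\{1\}$ and/or $\fl^{s\sigma}=\{0\}$, be dense in $\bK_0$; and those fixed-point sets depend only on the common data $\bL,\fl$, so they are literally the same for $\bG$ and for $H$. Thus the criterion for $H=\bL\rtimes\bK'$ is precisely the criterion for $\bG=\bL\rtimes\bK$ with the parameter $\sigma$ restricted to $\bK'\subseteq\bK$. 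If $\bG$ is almost-elliptic the criterion holds for all $\sigma\in\bK$, a fortiori for all $\sigma\in\bK'$, so $H$ is almost-elliptic, giving the forward implication. (The same restriction-of-quantifier remark re-proves the backward one, since every $\sigma\in\bK$ lies in the $\bK'$ attached to $d=\sigma$.)

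I expect the only genuine work to be the compatibility of the reduction with restriction to $\bG_0\cdot\overline{\Braket{d}}$: one must verify that the compact-normal-subgroup quotients and the pro-Lie limit used to bring $\bG$ into the form $\bL\rtimes\bK$ carry $\bG_0\cdot\overline{\Braket{d}}$ to $\bL\rtimes\bK'$ with common $\bL$ and $\bK_0$ and $\bK'\le\bK$, so that the two criteria really are one restricted to the other. Once this bookkeeping is in place, the representation-theoretic content has already been absorbed into \Cref{thn:k.by.l} (ultimately \Cref{le:s.cent.tor}), and nothing further is required.
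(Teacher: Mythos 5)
Your proposal is correct and follows essentially the same route as the paper: reduce via Wu's discussion to the semidirect products $\bL\rtimes\bK$ covered by \Cref{thn:k.by.l} and then observe that the equivalence \Cref{item:thn:k.by.l:k.act} $\Leftrightarrow$ \Cref{item:thn:k.by.l:k0.act} turns the claim into a restriction of the quantifier over $\sigma$ from $\bK$ to the procyclic-by-$\bK_0$ subgroups $\bK'$. The paper's proof is just a terser version of this (it does not separate out the elementary ``if'' direction), so no further comparison is needed.
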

\begin{proof}
  The discussion on \cite[p.869]{MR1044968} explains how the problem reduces to the semidirect products covered by \Cref{thn:k.by.l} (indeed, to \emph{Lie} semidirect products of that form), and in the latter's context the claim is plain. Note also that we do not need the full force of \Cref{thn:k.by.l}: the equivalence \Cref{item:thn:k.by.l:k.act} $\Leftrightarrow$ \Cref{item:thn:k.by.l:k0.act} suffices.
\end{proof}

The preceding result notwithstanding, \Cref{ex:semidir.z2} illustrates why one cannot generally reduce almost-ellipticity even further to $\bG_0$ alone: there is a qualitative difference between $\bG_0$ and semidirect products $\bG_0\rtimes \left(\bZ/d\right)$. 

The following auxiliary observation effectively proves the implication \Cref{item:thn:k.by.l:k0.act} $\Rightarrow$ \Cref{item:thn:k.by.l:k0.act.comm} of \Cref{thn:k.by.l}, and will be used in proving the formally stronger \Cref{item:thn:k.by.l:k0.act} $\Rightarrow$ \Cref{item:thn:k.by.l:k0.act.utor}. 

\begin{lemma}\label{le:s.cent}
  For a finite-dimensional representation $\bK\xrightarrow{\rho}\mathrm{GL}(V)$ of the compact group $\bK$ we have 
  \begin{equation*}
    \overline{\left\{s\in \bK\ :\ V^s=\{0\}\right\}}
    =
    \bK
    \xRightarrow{\quad}
    \forall\left(\sigma\in \bK\right)
    \left(
      \overline{\left\{s\in \left(\bK_0^{\sigma}\right)_0\ :\ V^{s\sigma}=\{0\}\right\}}
      =
      \left(\bK_0^{\sigma}\right)_0
    \right).
  \end{equation*}
\end{lemma}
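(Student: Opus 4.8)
The plan is to first reduce the asserted density to a bare \emph{existence} statement on the coset $H\sigma$ (where $H:=\left(\bK_0^{\sigma}\right)_0$), and then to produce one fixed-point-free element of $H\sigma$ by transporting one from elsewhere in the component $\bK_0\sigma$ via conjugation, which preserves the vanishing of fixed spaces.

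First I would record that, for fixed $\sigma$, the set $\left\{s\in H : V^{s\sigma}=\{0\}\right\}$ is dense in $H$ as soon as it is nonempty. Indeed $s\mapsto \det\!\left(I_V-\rho(s)\rho(\sigma)\right)$ is a real-analytic function on the compact connected group $H$ — its restriction $\rho|_H$ factors through a compact Lie quotient of $H$, on which matrix coefficients are real-analytic — and its non-vanishing locus is exactly the set in question. A real-analytic function on a connected manifold that does not vanish identically has nowhere-dense zero set, so this locus is either empty or dense; pulling back along the open continuous surjection onto the relevant Lie quotient disposes of the merely pro-Lie case. Hence the conclusion of the lemma is equivalent to: for every $\sigma$ there exists $s\in H$ with $V^{s\sigma}=\{0\}$.

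Next, the hypothesis supplies such an element in the full coset rather than in $H\sigma$. Since the connected components of a compact group are clopen, the dense set $\left\{g\in\bK : V^g=\{0\}\right\}$ meets the component $\bK_0\sigma$, so I may pick $g_0=x\sigma$ with $V^{g_0}=\{0\}$. Because $V^{ygy^{-1}}=\rho(y)\,V^{g}$ for all $y$, the condition $V^{(\cdot)}=\{0\}$ is invariant under $\bK_0$-conjugation, and it therefore suffices to conjugate $g_0$ into $H\sigma$. Concretely, this is the conjugacy statement
\begin{equation*}
  \text{every element of the coset }\bK_0\sigma\text{ is }\bK_0\text{-conjugate to an element of }H\sigma,
\end{equation*}
for then $y g_0 y^{-1}=s\sigma$ with $s\in H$ forces $V^{s\sigma}=\rho(y)V^{g_0}=\{0\}$. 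To prove it I would invoke a $\sigma$-invariant maximal pro-torus $T\le\bK_0$ — whose existence is \Cref{th:all.autos.fix.pro-tori} — chosen so that $\left(T^{\sigma}\right)_0$ is a maximal torus of $H$ (the usual ``fundamental'' choice, obtained from a maximal torus of $H$ by taking a $\sigma$-stable maximal torus of its centralizer). For such $T$ the conjugacy theory of non-connected compact Lie groups (de Siebenthal; equivalently the linear-algebraic statement \cite[Theorem 7.5]{stein_endo} transported to compact groups and taken to a limit via \cite[Corollary 2.36]{hm5}, exactly as in the proof of \Cref{thn:k.by.l}) shows every element of $\bK_0\sigma$ is $\bK_0$-conjugate into $\left(T^{\sigma}\right)_0\sigma\subseteq H\sigma$. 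Applying this to $g_0$ produces the required $s\in H$, and the first step then upgrades existence to density.

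The main obstacle I anticipate is precisely this conjugacy input. Unlike in the connected case, elements of a nonidentity component need \emph{not} be conjugate into an arbitrary $\sigma$-invariant torus, so one must genuinely arrange the fundamental $\sigma$-stable torus with $\left(T^{\sigma}\right)_0$ maximal in $H$; checking that every twisted conjugacy class reaches $\left(T^{\sigma}\right)_0\sigma$ (rather than only some larger subgroup), together with managing the pro-Lie limit, is where the real work lies.
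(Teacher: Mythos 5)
Your proposal is correct in substance, but it takes a genuinely different route from the paper's. The paper argues \emph{locally}: writing $\fk=\fk^{\sigma}\oplus\im\left(1-\Ad_{\sigma}\right)$, it observes that the twisted-conjugation map $(t,u)\mapsto t^{-1}u\Ad_{\sigma}(t)$ on $\bK_0\times\bK_0^{\sigma}$ is a submersion at $(1,1)$, so every $s\in\bK_0$ close to $1$ factors as $t_s^{-1}u_s\Ad_{\sigma}(t_s)$ with $u_s\in\left(\bK_0^{\sigma}\right)_0$ close to $1$; hence $s\sigma$ is conjugate to $u_s\sigma$, and the fixed-point-free elements near $\sigma$ supplied by the hypothesis are dragged into $\left(\bK_0^{\sigma}\right)_0\sigma$ near $\sigma$. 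This needs no global structure theory and directly yields ``$1$ in the closure'', which the paper then converts to density. You instead combine a \emph{global} conjugacy input --- de Siebenthal's theorem that, for a fundamental $\sigma$-stable maximal torus $T$ (one with $\left(T^{\sigma}\right)_0$ maximal in $\left(\bK_0^{\sigma}\right)_0$), every element of $\bK_0\sigma$ is $\bK_0$-conjugate into $\left(T^{\sigma}\right)_0\sigma$ --- with a real-analyticity argument upgrading one fixed-point-free element to a dense set of them. Two remarks. First, your analyticity step (the non-vanishing locus of $s\mapsto\det\left(I-\rho(s)\rho(\sigma)\right)$ on a connected group is empty or dense) is clean and arguably sharper than the paper's appeal to left-multiplication invariance; it is correct as written, including the passage through a Lie quotient of $H$. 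Second, your conjugacy input is a genuine classical theorem (de Siebenthal; Bourbaki, Lie IX, \S 5), and you are right that the fundamental choice of $T$ is essential (for a generic $\sigma$-stable torus, $\left(T^{\sigma}\right)_0$ can be too small), but be aware that \cite[Theorem 7.5]{stein_endo} as used in the paper only delivers the \emph{existence} of a $\sigma$-stable Borel/torus pair, not the statement that the conjugates of $\left(T^{\sigma}\right)_0\sigma$ cover the whole component; you should cite the twisted-conjugacy theorem itself rather than hope to extract it from that reference. The trade-off is clear: your route buys a conceptually transparent ``exists $\Rightarrow$ dense'' structure at the cost of a heavier black box, while the paper's submersion argument is self-contained but only produces elements near $\sigma$ and must then separately justify full density. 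Minor quibble: ``the connected components of a compact group are clopen'' is only true after the reduction to the Lie case (profinite groups are a counterexample), so state that reduction before invoking it.
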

\begin{proof}
  there is no harm in assuming all groups in sight Lie. The equivalence \Cref{item:thn:k.by.l:k.act} $\Leftrightarrow$ \Cref{item:thn:k.by.l:k0.act} of \Cref{thn:k.by.l} will also be taken for granted, so that the hypothesis can be phrased as either one of these conditions, as convenient.

  We are assuming $\sigma\in \bK$ arbitrarily approximable by elements $s \sigma\in \bK$ with no 1-eigenvectors in $V$ and $s\in \bK_0$. The claim is that the $s$ can in fact be chosen in the smaller subgroup $\left(\bK_0^{\sigma}\right)_0$ instead. To see this, observe that because
  \begin{equation*}
    \ker\left(1-\Ad_{\sigma}\right)
    =
    \fk^{\sigma}
    =
    Lie\left(\bK_0^{\sigma}\right)_0
  \end{equation*}
  we have 
  \begin{equation*}
    \fk:=Lie(\bK)
    =
    \fk^{\sigma}\oplus \im\left(1-\Ad_{\sigma}\right). 
  \end{equation*}
  It follows that the map
  \begin{equation}\label{eq:tw.ad}
    \bK_0\times \bK_0^{\sigma}
    \ni
    (t,u)
    \xmapsto{\quad}
    t^{-1} \cdot u\cdot \Ad_{\sigma}(t)
    \in \bK_0
  \end{equation}
  is a \emph{submersion} \cite[p.78]{lee2013introduction} at $(1,1)$. it follows \cite[Theorem 4.26]{lee2013introduction} that \Cref{eq:tw.ad} splits locally around $1\in \bK_0$ via some smooth map $(t_{\bullet},u_{\bullet})$:
  \begin{equation*}
    \forall\left(1\sim s\in \bK_0\right)
    \quad:\quad
    t_s^{-1} \cdot u_s\cdot \Ad_{\sigma}(t_s)=s, 
  \end{equation*}
  with `$\sim$' meaning ``close to''. We can now conclude: $s\sigma$ with $1\sim s$ is $t_s$-conjugate to $u_s \sigma$ with $1\sim u_s\in\left(\bK^{\sigma}_0\right)_0$. 
\end{proof}

\begin{lemma}\label{le:s.cent.tor}
  Let $\bK\xrightarrow{\rho}\mathrm{GL}(V)$ be a finite-dimensional representation of the compact group $\bK$. If
  \begin{equation*}
    \overline{\left\{s\in \bK\ :\ V^s=\{0\}\right\}}
    =
    \bK
  \end{equation*}
  then for every $s\in \bK$ and $s$-invariant maximal pro-torus $\bT\le \bK_0$ the set \Cref{eq:etor} with $V$ in place of $\fl$ is dense in $(\bT^s)_0$.
\end{lemma}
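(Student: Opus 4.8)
The plan is to import the density already established in \Cref{le:s.cent} and transport it from the centralizer $(\bK_0^s)_0$ into the maximal pro-torus by a conjugation argument. Reducing to the Lie case exactly as in the proof of \Cref{le:s.cent}, write $H:=(\bK_0^s)_0$, $\fh:=Lie(H)$ and $W:=(\bT^s)_0$, and set
\[
  S:=\left\{t\in H\ :\ V^{ts}=\{0\}\right\}.
\]
Applying \Cref{le:s.cent} with its $\sigma$ taken to be the present $s$ gives precisely that $S$ is dense in $H$; the task is then to promote this to density of $S\cap W$ in $W$.

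Two features of $S$ will drive the argument. First, $S$ is open in $H$, since $V^{ts}=\{0\}$ is the condition $\det\left(\rho(ts)-1\right)\neq 0$. Second, $S$ is stable under $H$-conjugation: every $h\in H\subseteq \bK_0^s$ commutes with $s$, so $h(ts)h^{-1}=\left(hth^{-1}\right)s$ and hence $V^{(hth^{-1})s}=\rho(h)\,V^{ts}$, whence $hth^{-1}\in S\Leftrightarrow t\in S$. The structural input is that $W=(\bT^s)_0$ is a maximal pro-torus of the connected compact group $H=(\bK_0^s)_0$ — the standard fact that the identity component of the fixed points of a semisimple automorphism on an invariant maximal torus is a maximal torus of the identity component of the fixed-point group (here $\Ad_s$ restricted to $\bK_0$ is semisimple precisely because $\bK$ is compact). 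In particular every element of $H$ is $H$-conjugate into $W$, and the $H$-regular elements form a dense subset of $W$.

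With these in hand the transport of density is routine. Given a non-empty open $U\subseteq W$, choose an $H$-regular $w_0\in U$; the conjugation map $q\colon H\times W\to H$, $(h,w)\mapsto hwh^{-1}$, is a submersion at $(1,w_0)$, because regularity of $w_0$ makes $\ker\left(\Ad_{w_0}-1\right)=Lie(W)$, so that after left-translation the image of the differential is $\im\left(\Ad_{w_0}-1\right)+Lie(W)=\fh$. Thus $q(H\times U)$ contains a neighborhood of $w_0=q(1,w_0)$ in $H$, which must meet the dense set $S$; any point of $S$ in $q(H\times U)$ is $H$-conjugate to some $w\in U$, and conjugation-invariance of $S$ then forces $w\in S\cap U$. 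Hence $S$ meets every non-empty open subset of $W$, i.e.\ $S\cap W$ is dense in $W$, as required.

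The main obstacle is the structural fact of the second paragraph — that $(\bT^s)_0$ is genuinely \emph{maximal} in $(\bK_0^s)_0$, not merely some subtorus — because without maximality no element of $W$ would be $H$-regular, the image $q(H\times U)$ would be lower-dimensional with empty interior, and the whole transport would collapse. This is exactly the point at which the compactness of $\bK$ (forcing $\Ad_s$ to be semisimple on $\bK_0$) and the assumed maximality of $\bT$ in $\bK_0$ are indispensable.
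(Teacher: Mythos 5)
Your overall strategy---reduce to the centralizer $H=(\bK_0^{s})_0$ via \Cref{le:s.cent} and then transport the dense set into the torus by conjugation---is the same as the paper's, and the transport step itself is done carefully and correctly (openness and $H$-conjugation-invariance of $S$, submersion of $q$ at a regular point of $W$). The gap is exactly at the structural claim you single out as the crux: that $(\bT^{s})_0$ is a maximal torus of $(\bK_0^{s})_0$ for \emph{every} $s$-invariant maximal torus $\bT\le\bK_0$. That is not a standard fact, and it is false as stated. Take $\bK=\mathrm{SU}(2)$, $s=\left(\begin{smallmatrix}0&1\\-1&0\end{smallmatrix}\right)$ and $\bT$ the diagonal torus: $\bT$ is $s$-invariant (with $\Ad_s$ acting on it by inversion), so $(\bT^{s})_0=\{1\}$, while $(\bK_0^{s})_0=Z_{\mathrm{SU}(2)}(s)_0$ is the circle through $s$ and has rank $1$. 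The genuine standard fact (Steinberg et al.) is an \emph{existence} statement: \emph{some} $s$-invariant maximal torus---namely one containing a maximal torus of $(\bK_0^{s})_0$---has the property you need; but the lemma quantifies over \emph{all} invariant tori (this is what feeds condition \Cref{item:thn:k.by.l:k0.act.utor} of \Cref{thn:k.by.l}), and semisimplicity of $\Ad_s$ does not rescue the claim for a badly positioned $\bT$. In that situation $W$ contains no $H$-regular element, $q$ is nowhere a submersion, and---as you yourself observe---the whole transport collapses; in the $\mathrm{SU}(2)$ example the conclusion degenerates to the bare assertion $V^{s}=\{0\}$, which your argument does not touch.

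For what it is worth, this is genuinely the delicate point of the lemma rather than an oversight peculiar to your write-up: the paper's own proof pivots on the closely parallel assertion that $\bT$ normalizes $(\bK_0^{\sigma})_0$ and hence contains a maximal torus of it, which is an equally strong positioning statement about an \emph{arbitrary} invariant maximal torus and fails for the same $\mathrm{SU}(2)$ pair ($\bT$ and $Z_{\mathrm{SU}(2)}(s)_0$ are two distinct maximal tori, and neither normalizes the other). So you have located where the work lies, but the justification offered does not close it; an argument that handles arbitrary (as opposed to well-positioned) $s$-invariant maximal tori---or a reduction of the general case to the well-positioned one---is still needed.
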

\begin{proof}
  $\bK$ will again be Lie. If $\sigma\in \bK$ leaves $\bT$ invariant it also permutes the summands of the corresponding \emph{root-space decomposition} \cite[post Proposition 6.44]{hm5} of $\fk$. It follows that $\bT$ normalizes $\left(\bK_0^{\sigma}\right)_0$, so centralizes (and hence contains) a maximal torus therein.

  There is thus no loss of generality in substituting $\left(\bK_0^{\sigma}\right)_0\cdot \overline{\Braket{\sigma}}$ for $\bK$ and hence assuming $\sigma$ central in $\bK$. Now, though, there is little left to prove: the elements $s\sigma$, $s\in \bK_0$ with no 1-eigenvectors and approximating $\sigma$ arbitrarily are (by the centrality of $\sigma$) $\bK_0$-conjugate to analogous products with $s\in \bT$ instead simply because the conjugates of $\bT$ cover $\bK_0$ \cite[Theorem 9.32]{hm5}. 
\end{proof}

\begin{remarks}\label{res:inv.pro.tor}
  \begin{enumerate}[(1),wide]
  \item\label{item:res:inv.pro.tor:fp} There are other routes to the observation, made in passing in the course of the proof of \Cref{thn:k.by.l}, that a monothetic compact automorphism group of a compact group $\bK$ (which may as well be assumed connected) always leaves invariant some maximal pro-torus $\bT\le \bK$. A number of observations coalesce to confirm this.

    \begin{enumerate}[(I),wide]
    \item The mutual conjugacy \cite[Theorem 9.32]{hm5} of the maximal pro-tori gives the identification
      \begin{equation*}
        \cT(\bK):=\left(\left\{\text{maximal pro-tori}\right\},\ \text{\emph{Vietoris topology} \cite[\S 1.2]{ct_vietoris}}\right)
        \quad
        \cong
        \quad
        \bK/N_{\bK}(\bT),
      \end{equation*}
      with $\bT\le \bK$ a fixed maximal pro-torus and $N_{\bK}(\bullet)$ denoting normalizers.

    \item For compact $\bL\le \Aut(\bK)$ one can always recover 
      \begin{equation*}
        \begin{aligned}
          \bK
          &\cong
            \varprojlim_{\substack{\bM\trianglelefteq \bK\\\bM\text{ is $\bL$-invariant}}}\left(\text{Lie quotient }\bK/\bM\right)
          \quad\text{and}\\
          \cT(\bK)
          &\cong
            \varprojlim_{\substack{\bM\trianglelefteq \bK\\\bM\text{ is $\bL$-invariant}}}\cT\left(\bK/\bM\right)
          \quad
          \text{\cite[Lemma 9.31]{hm5}}
        \end{aligned}    
      \end{equation*}
      simply apply the usual \cite[Corollary 2.36]{hm5} Lie-group approximation result to the semidirect product $\bK\rtimes \bL$. It thus suffices to assume
      \begin{equation*}
        \bK\ \text{and}\ \bL\text{ Lie}
        \quad
        \xRightarrow[\quad\bL\text{ monothetic}\quad]{\quad\text{\cite[Theorem 4.2.4]{de}}\quad}
        \quad
        \bL\cong \bT^d\times \bZ/k.
      \end{equation*}

    \item The quotient space $\cT(\bK)\cong \bK/N_{\bK}(T)$ is \emph{$\bQ$-acyclic} in the sense of \cite[Definition IV.5.11]{bred_gt_1997}. \cite[Lemma 2, pp.50-51]{MR2624762} argues this for unitary groups $\bK:=U(n)$, and the central ingredients are present generally by \cite[Proposition 1.2]{bgg_schubert_en}: 
      \begin{itemize}
      \item the rational cohomology of $\bK/\bT$ is all even;
      \item its dimension (and hence the \emph{Euler characteristic} \cite[Theorem 2.44]{hatch_at} $\chi(\bK/\bT)$) equals
        \begin{equation*}
          |W|
          ,\quad
          W
          :=
          W_{\bK}(\bT)
          :=
          N_{\bK}(\bT)/\bT
          \quad
          \left(\text{\emph{Weyl group} \cite[Definitions 9.30]{hm5} of $\bK$}\right);
        \end{equation*}
      \item and $W$ acts on $\bK/\bT$ freely with quotient $\bK/N_{\bK}(\bT)$, so the latter must have 1-dimensional rational cohomology. 
      \end{itemize}

    \item The fixed-point set $\cT(\bK)^{\bL_0}$ is then again $\bQ$-acyclic \cite[Lemma 1, p.48]{MR2624762}.

    \item And finally, the fixed-point set
      \begin{equation*}
        \cT(\bK)^{\bL}
        =
        \left(\cT(\bK)^{\bL_0}\right)^{\bL/\bL_0}
        =
        \left(\cT(\bK)^{\bL_0}\right)^{\bZ/n}      
      \end{equation*}
      has Euler characteristic 1 (so cannot be empty) by \cite[Lemma 1]{zbMATH03476353}. 
    \end{enumerate}

  \item\label{item:res:inv.pro.tor:lie} When $\bK$ is compact and Lie one need not require the monothetic group $\bL$ to be compact: arbitrary automorphisms of $\bK$ have invariant maximal tori. Indeed, as all maximal tori contain the connected center $Z_0(\bK):=Z(\bK)_0$, we can quotient it out and hence \cite[Theorem 9.24(iv)]{hm5} assume $\bK$ semisimple. In that case, though, its automorphism group is in any case compact \cite[Theorem 6.61(vi)]{hm5}.
    
    This same observation, it turns out, applies to arbitrary compact groups (though the argument itself does not without some supplementation): per \Cref{th:all.autos.fix.pro-tori}, every automorphism of a compact group leaves invariant some maximal pro-torus.
  \end{enumerate}
\end{remarks}

It turns out that \emph{arbitrary} (possibly non-elliptic) automorphisms of compact connected groups have invariant maximal pro-tori.

\begin{theorem}\label{th:all.autos.fix.pro-tori}
  An automorphism of a compact group $\bK$ leaves invariant some maximal pro-torus $\bT\le \bK_0$. 
\end{theorem}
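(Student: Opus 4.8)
The plan is to reduce the problem to a product of compact simple Lie groups, where a single automorphism can be tracked factor by factor, and to avoid inverse limits altogether. First I would replace $\bK$ by its identity component $\bK_0$: this is characteristic, hence $\phi$-invariant, and every pro-torus, being connected, already lies in $\bK_0$, so a maximal pro-torus of $\bK_0$ is one of $\bK$. Thus assume $\bK$ connected. The essential difficulty, and the reason the material in \Cref{res:inv.pro.tor} does not apply verbatim, is that $\overline{\langle\phi\rangle}\le \Aut(\bK)$ need not be compact; consequently neither the compact-automorphism-group Euler-characteristic argument recorded there nor a naive inverse-limit argument over $\phi$-invariant Lie-quotient normal subgroups is available. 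Indeed, for a shift automorphism of $\prod_{\bZ}\mathrm{SU}(2)$ the only $\phi$-invariant closed normal subgroups with Lie quotient are trivial, so the system $\varprojlim\cT(\bK/\bM)$ cannot be made $\phi$-equivariant.

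Second, I would invoke the structure theory of compact connected groups \cite[Theorem 9.24]{hm5} to obtain a surjection $p:\wt\bK := Z(\bK)_0\times\prod_{j\in J}S_j \twoheadrightarrow \bK$ with central, totally-disconnected kernel, where the $S_j$ are simply-connected simple compact Lie groups. Since $\prod_j S_j$ is the simply-connected semisimple covering of $\bK'=\overline{(\bK,\bK)}$ and $\phi$ preserves both $Z(\bK)_0$ and $\bK'$, the automorphism $\phi$ lifts to $\wt\phi\in\Aut(\wt\bK)$ with $p\,\wt\phi=\phi\, p$. Because the $S_j$ are exactly the minimal closed connected normal subgroups of $\prod_j S_j$ (a diagonal in a product of nonabelian simple factors fails to be normal), $\wt\phi$ permutes them: there is a permutation $\pi$ of $J$ with $\wt\phi(S_j)=S_{\pi(j)}$ and $\wt\phi|_{S_j}:S_j\xrightarrow{\sim}S_{\pi(j)}$, while $\wt\phi$ preserves the factor $Z(\bK)_0$.

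Third, I would assemble a $\wt\phi$-invariant maximal pro-torus $\wt\bT=Z(\bK)_0\times\prod_j T_j$ orbit by orbit. The factor $Z(\bK)_0$, being compact connected abelian, is its own (automatically $\wt\phi$-invariant) maximal pro-torus. On a finite $\pi$-orbit $O=\{j,\pi j,\dots,\pi^{n-1}j\}$ the return map $\alpha:=\wt\phi^{\,n}|_{S_j}$ is an automorphism of the compact Lie group $S_j$, so by the Lie-group case of \Cref{res:inv.pro.tor} it fixes some maximal torus $T_j\le S_j$; putting $T_{\pi^i j}:=\wt\phi^{\,i}(T_j)$ is consistent precisely because $\wt\phi^{\,n}(T_j)=\alpha(T_j)=T_j$. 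On an infinite orbit (a copy of $\bZ$) there is no return constraint, so I pick an arbitrary maximal torus at a base point and translate it by the powers of $\wt\phi$. The resulting $\wt\bT$ is then a maximal pro-torus of $\wt\bK$ with $\wt\phi(\wt\bT)=\wt\bT$.

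Finally I would push down: $\bT:=p(\wt\bT)$ is a maximal pro-torus of $\bK$, since surjective morphisms of compact connected groups carry maximal pro-tori to maximal pro-tori \cite[Theorem 9.32]{hm5}, and $\phi(\bT)=\phi(p(\wt\bT))=p(\wt\phi(\wt\bT))=p(\wt\bT)=\bT$ because $p\,\wt\phi=\phi\, p$. The main obstacle throughout is exactly the bookkeeping forced by the non-compactness of $\langle\phi\rangle$ — securing $\phi$-invariance with no compact ambient automorphism group to appeal to — and the virtue of the simple-factor decomposition is that it reduces this to the single genuine constraint, namely the finite-orbit consistency condition, which is settled by the already-established Lie case.
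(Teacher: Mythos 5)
Your proof is correct and follows essentially the same route as the paper's: lift the automorphism to a product of compact simple simply-connected Lie groups where it permutes the factors, translate a chosen maximal torus along each infinite orbit, settle the finite orbits by the previously recorded invariant-torus results, and push the resulting pro-torus back down along the profinite-kernel covering. The only (harmless) divergence is that you handle each finite orbit separately via the return map $\wt\phi^{\,n}|_{S_j}$ and the compact-Lie case of \Cref{res:inv.pro.tor}, whereas the paper bundles all finite orbits into a single factor $\bK_{<\infty}$ on which the restricted automorphism is elliptic and applies the fixed-point argument there; you also carry the connected center along as a direct factor rather than quotienting it out first, which is purely cosmetic.
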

\begin{proof}
  We will harmlessly assume $\bK$ connected throughout, as well as \emph{semisimple} (in the sense of \cite[Definition 9.5]{hm5}): this has no effect on the argument, as observed in \Cref{res:inv.pro.tor}\Cref{item:res:inv.pro.tor:lie}. An automorphism of $\bK$ will induce one on its \emph{pro-Lie algebra} 
  \begin{equation*}
    \fk:=Lie(\bK)
    \overset{\text{\cite[\S 2]{hm_pro-lie-struct}}}{\cong}
    \prod_{j\in J}\fs_j
    ,\quad
    \text{simple compact Lie algebras }\fs_j
  \end{equation*}
  and hence on the product $\widetilde{\bK}\cong \prod_j \bS_j$ of compact, simple, simply-connected Lie groups $\bS_j$ equipped with a profinite-kernel surjection
  \begin{equation*}
    \widetilde{\bK}
    \xrightarrowdbl{\quad}
    \bK
    \quad
    \left(\text{\cite[Corollary 9.25]{hm5}, where $\widetilde{\bK}$ would be $\bK^*$}\right).
  \end{equation*}
  Because images of maximal pro-tori through surjections are again such \cite[Lemma 9.31]{hm5}, it is enough to assume $\bK$ of the form $\prod_j \bS_j$. 

  An automorphism $\alpha\in \Aut(\bK)$ will then permute the factors $\bS_j$ \cite[Proposition 3.1 and Theorem 2.10]{hm_pro-lie-struct}, hence an induced permutation $\alpha\circlearrowright J$ (denoted abusively by the same symbol as the original automorphism) and an $\alpha$-invariant decomposition
  \begin{equation*}
    \bK=\bK_{\infty} \times \bK_{<\infty}
    \quad\text{where}\quad
    \begin{aligned}
      \bK_{\infty}
      &:=
        \prod_{|\alpha j|=\aleph_0}\prod_{j'\in \alpha_j}\bS_{j'}\\
      \bK_{<\infty}
      &:=
        \prod_{|\alpha j|<\infty}\prod_{j'\in \alpha_j}\bS_{j'}.
    \end{aligned}
  \end{equation*}
  The two factors can be handled separately, and the conclusion follows.
  \begin{itemize}[wide]
  \item On $\bK_{<\infty}$ the automorphism $\alpha$ is elliptic \cite[Lemma 2.6(ii)]{hm_pro-lie-struct} so the preceding discussion (e.g. \Cref{res:inv.pro.tor}\Cref{item:res:inv.pro.tor:fp}) applies.

  \item On the other hand, $\bK_{\infty}$ plainly has an $\alpha$-invariant maximal pro-torus: $\prod_j \bT_j$, where $\bT_j\le \bS_j$ is a fixed maximal pro-torus selected arbitrarily for $j$ ranging over a system of representatives for the infinite orbits of $\alpha$ and $\bT_{\alpha j}:=\alpha \bT_j$ for all infinite-orbit $j$. 
  \end{itemize}
\end{proof}



\addcontentsline{toc}{section}{References}

\def\polhk#1{\setbox0=\hbox{#1}{\ooalign{\hidewidth
  \lower1.5ex\hbox{`}\hidewidth\crcr\unhbox0}}}

\Addresses

\end{document}